\newtheorem{theorem}{Theorem}
\newtheorem{lemma}[theorem]{Lemma}
\theoremstyle{remark}
\theoremstyle{definition}
\newtheorem{Def}[theorem]{Definition}
\title[A short note on the Radon-Riesz property]{A short note on the Radon-Riesz property for continuous Banach space valued functions}
\author{Arne Roggensack}
\subjclass[2010]{Primary: 46B50; Secondary: 46B20}
\keywords{Radon-Riesz property, Kadec-Klee property, uniformly convex, uniformly smooth}
\begin{document}

\begin{abstract}
We present a generalization of the Radon-Riesz property to sequences of continuous functions with values in uniformly convex and uniformly smooth Banach spaces.
\end{abstract}
\date{\today}
\maketitle

\section{Introduction}
In many analytical questions, the space of continuous functions with values in a Banach space plays a crucial role. Naturally, it arises the question how to decide whether a sequence is converging or not. In certain Banach spaces, the Radon-Riesz property (also called Kadec–Klee property or property (H)) is a useful tool in order to determine that. It was first proved for $L^p(\Omega)$ by Radon \cite{radon1913} and Riesz \cite{riesz1928}. In this note, we give a generalization of this property to continuous Banach space valued functions for uniformly convex and uniformly smooth Banach spaces.

The well-known Radon-Riesz property ensures the strong convergence of a weakly convergent sequence $x_k\rightharpoonup x$, if the sequence of the norm $\|x_k\|$ is converging to $\|x\|$. There are different sufficient characterizations known in order to determine if a Banach space has this property. A classical result states that uniformly convex Banach spaces have the Radon-Riesz property. Details on this result can be found in \cite{megginson1998}.

The theorem stated in this note can e.g.\ be applied to proof the sequential continuity of the solution operator of the transport equation (see \cite{RoggensackDiss.2014} or \cite{RoggensackTransport.2014}).

\section{Radon-Riesz property}
We first recall the notion of uniform convexity and uniform smoothness.
\begin{Def}\label{def:uniform_convex}
	A normed vector space $V$ is called \emph{uniformly convex} if for all $\varepsilon >0$ there exists $\delta>0$ such that for all $x,y\in V$ with $\|x\|=\|y\|\leq 1$ the following implication is true:
	\begin{equation*}
	\|x-y\|\geq \varepsilon\Rightarrow \left\|\frac{x+y}2\right\|\leq 1-\delta.
	\end{equation*}
\end{Def}
\begin{Def}
	A normed vector space $V$ is called \emph{uniformly smooth} if for all $\varepsilon >0$ there exists $\delta>0$ such that it holds 
	\begin{equation*}
	\|x+y\|+\|x-y\|\leq 2+\varepsilon\|y\|
	\end{equation*}
	for all $x,y\in V$ with $\|x\|=1$ and $\|y\|\leq \delta$.
\end{Def}
It is well-known that $V$ is uniformly smooth if and only if $V'$ is uniformly convex.

The proof of the desired generalization follows the same lines as the proof of the original property for uniform convex spaces (see e.g.\ \cite{megginson1998}). We begin with the following auxiliary lemma ensuring the existence of a certain continuous function with values in the dual space.
\begin{lemma}
	\label{lemma:uniform_convex}
	Let $V$ be a uniform smooth Banach space and let be $f\in C([0,T],V)$. Then, there exists for each $\varepsilon>0$ a function $\varphi\in C([0,T],V')$ with $\langle \varphi(t),f(t)\rangle=\|f(t)\|_V$ and $\|\varphi(t)\|_{V'}=1$ for all $t$ with $\|f(t)\|_V\geq\varepsilon$.
\end{lemma}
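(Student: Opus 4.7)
The plan is to construct $\varphi$ in three stages: first select, pointwise, the unique norming functional of $f(t)$ wherever $f(t)\neq 0$; second, establish that this selection is continuous in $t$; third, use a real-valued cutoff to extend the construction continuously to all of $[0,T]$.

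Since $V$ is uniformly smooth, its norm is Gâteaux (in fact uniformly Fréchet) differentiable away from the origin and $V'$ is uniformly convex and therefore reflexive. Gâteaux differentiability forces the norming functional at each $v\neq 0$ to be unique, so one obtains a well-defined duality map $J\colon V\setminus\{0\}\to V'$, $v\mapsto\psi_v$, characterized by $\|\psi_v\|_{V'}=1$ and $\langle\psi_v,v\rangle=\|v\|_V$ (existence via Hahn--Banach, uniqueness via smoothness).

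Next I would show that $J$ is norm-continuous. Given $v_n\to v\neq 0$, the sequence $(\psi_{v_n})$ is bounded in $V'$, so by reflexivity of $V'$ some subsequence converges weakly to some $\psi\in V'$. Passing to the limit in $\langle\psi_{v_n},v_n\rangle=\|v_n\|_V$ (using $v_n\to v$ in norm and $\|\psi_{v_n}\|\leq 1$) yields $\langle\psi,v\rangle=\|v\|_V$ together with $\|\psi\|_{V'}\leq 1$, which forces $\|\psi\|_{V'}=1$ and, by uniqueness of the norming functional, $\psi=\psi_v$. The usual subsequence principle then gives $\psi_{v_n}\rightharpoonup\psi_v$ for the full sequence. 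Since $\|\psi_{v_n}\|_{V'}=\|\psi_v\|_{V'}=1$, the classical Radon--Riesz property in the uniformly convex space $V'$ upgrades this to strong convergence $\psi_{v_n}\to\psi_v$.

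Finally, set $U:=\{t\in[0,T]\colon\|f(t)\|_V>\varepsilon/2\}$, which is open, and put $\psi(t):=J(f(t))$ on $U$; continuity on $U$ is immediate by composition. The closed sets $A:=\{t\colon\|f(t)\|_V\geq\varepsilon\}$ and $B:=[0,T]\setminus U$ are disjoint, so Urysohn's lemma provides a continuous $\chi\colon[0,T]\to[0,1]$ with $\chi\equiv 1$ on $A$ and $\chi\equiv 0$ on $B$. Defining $\varphi(t):=\chi(t)\psi(t)$ for $t\in U$ and $\varphi(t):=0$ otherwise gives a continuous $V'$-valued function: outside $U$ the factor $\chi$ vanishes, while $\psi$ stays a unit vector on $U$, so the two pieces glue continuously at $\partial U\subset B$. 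On $A$, $\chi\equiv 1$, hence $\varphi\equiv\psi$ satisfies the two required identities. The main obstacle is step two, the norm-continuity of $J$, which is precisely where both hypotheses are genuinely used: uniform smoothness of $V$ for the uniqueness of $\psi_v$, and uniform convexity of $V'$ for turning weak convergence of unit vectors into norm convergence.
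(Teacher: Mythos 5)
Your proof is correct, but it takes a genuinely different route from the paper's. The paper picks an arbitrary Hahn--Banach norming functional at each $t$ in the closed set $M=\{t:\|f(t)\|_V\geq\varepsilon\}$ and shows \emph{directly} that this selection is continuous on $M$: a short computation bounds $\bigl\|\tfrac{\varphi(s)+\varphi(t)}{2}\bigr\|_{V'}$ from below by $1-\tfrac12\bigl\|\tfrac{f(s)}{\|f(s)\|_V}-\tfrac{f(t)}{\|f(t)\|_V}\bigr\|_V$, so the uniform convexity of $V'$ forces $\varphi(s)$ and $\varphi(t)$ to be close whenever the normalized values of $f$ are; the extension to $[0,T]$ is then obtained from Dugundji's extension theorem. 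You instead exploit that uniform smoothness makes the norming functional \emph{unique}, obtain a globally defined duality map $J$ on $V\setminus\{0\}$, and prove its continuity by a soft compactness argument (reflexivity of $V'$, identification of every weak subsequential limit via uniqueness, then the classical Radon--Riesz property of the uniformly convex $V'$ to upgrade to norm convergence); the extension step is handled by a Urysohn cutoff between $\{\|f\|_V\geq\varepsilon\}$ and $\{\|f\|_V\leq\varepsilon/2\}$ rather than by Dugundji. Both hypotheses are used in both proofs, just in different places. Your version buys a more elementary extension step (a scalar cutoff on $[0,T]$ instead of Dugundji's theorem, which is the heaviest ingredient in the paper's argument) and isolates the continuity of the duality map as a reusable fact; the paper's version buys a quantitative modulus of continuity for the selection and avoids invoking uniqueness of the norming functional or the Radon--Riesz property of $V'$ as black boxes. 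One small remark: your argument, unlike the paper's, produces a $\varphi$ with $\|\varphi(t)\|_{V'}\leq 1$ everywhere rather than exactly $1$ off the set $A$, but the lemma only demands the identities on $\{\|f(t)\|_V\geq\varepsilon\}$, so this is harmless.
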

\begin{proof}
	Let be $M=\{t\in[0,T]|\|f(t)\|_V\geq \varepsilon\}$.
	For each $t\in M$, we use the Hahn-Banach theorem to find a vector $\varphi(t)\in V'$ with $\langle \varphi(t),f(t)\rangle=\|f(t)\|_V$ and $\|\varphi(t)\|_{V'}=1$. We will show the continuity of $\varphi$ on the closed set $M$. Afterwards, Dugundji's extension theorem \cite{Dugundji.1951} yields the existence of a continuous extension of $\varphi$ on $[0,T]$.
	
	To show the continuity of $\varphi$ on $M$ with values in $V'$, let be $\tilde\varepsilon>0$. Since $V$ is uniformly smooth, the dual space $V'$ is uniformly convex and we can choose $\delta>0$ like in the Definition~\ref{def:uniform_convex} of the uniform convexity of $V'$. Because of the continuity of $\frac{f(t)}{\|f(t)\|_V}$ with values in $V$, we find $\gamma>0$ such that 
	\begin{equation*}
	\left\|\frac{f(s)}{\|f(s)\|_V}-\frac{f(t)}{\|f(t)\|_V}\right\|_V<2\delta
	\end{equation*}
	holds for all $s,t\in M$ with $|s-t|<\gamma$. Then, we compute
	\begin{align*}
	&\left\|\frac{\varphi(t)+\varphi(s)}{2}\right\|_{V'}\\
	&\quad=\frac 12\sup\limits_{\substack{v\in V\\ \|v\|\leq 1}}\left|\langle\varphi(t),v\rangle+\langle\varphi(s),v\rangle\right|\\
	&\quad\geq \frac 12\left|\left\langle \varphi(t),\frac{f(t)}{2\|f(t)\|_V}+\frac{f(s)}{2\|f(s)\|_V}\right\rangle+\left\langle \varphi(s),\frac{f(t)}{2\|f(t)\|_V}+\frac{f(s)}{2\|f(s)\|_V}\right\rangle\right|\\
	&\quad=\frac 12\left|\frac{\langle\varphi(t),f(t)\rangle}{\|f(t)\|_V}+\frac{\langle\varphi(s),f(s)\rangle}{\|f(s)\|_V}+\frac 12\left\langle\varphi(t),\frac{f(s)}{\|f(s)\|_V}-\frac{f(t)}{\|f(t)\|_V}\right\rangle\right.\\
	&\quad\qquad\left.+\frac 12\left\langle\varphi(s),\frac{f(t)}{\|f(t)\|_V}-\frac{f(s)}{\|f(s)\|_V}\right\rangle\right|\\
	&\quad\geq \frac 12\left(2-\frac 12\left|\left\langle\varphi(t),\frac{f(s)}{\|f(s)\|_V}-\frac{f(t)}{\|f(t)\|_V}\right\rangle\right|\right.\\
	&\quad\qquad\left.-\frac 12\left|\left\langle\varphi(s),\frac{f(t)}{\|f(t)\|_V}-\frac{f(s)}{\|f(s)\|_V}\right\rangle\right|\right)\\
	&\quad\geq 1-\frac 12\left\|\frac{f(s)}{\|f(s)\|_V}-\frac{f(t)}{\|f(t)\|_V}\right\|_V\\
	&\quad>1-\delta.
	\end{align*}
	Using the uniform convexity of the dual space $V'$ we conclude
	\begin{equation*}
	\|\varphi(t)-\varphi(s)\|_{V'}<\tilde\varepsilon,
	\end{equation*}
	which proves the desired continuity of $\varphi$.
\end{proof}
With this preliminary result, we are able to prove the main result of this note concerning the following generalization of the Radon-Riesz property for uniformly convex and uniformly smooth Banach spaces.
\begin{theorem}\label{lemma:gleichmaessig}
	Let $V$ be a uniformly convex and uniformly smooth Banach space. Let $(f_n)_n\subset C([0,T],V)$ be a sequence and $f\in C([0,T],V)$ a function such that the pair $((f_n)_n,f)$ 
	fulfils the following two properties:
	\begin{enumerate}
		\item $\|f_n(t)\|_V$ converges uniformly to $\|f(t)\|_V$ and
		\item $\langle\varphi(t),f_n(t)\rangle$ converges uniformly to $\langle\varphi(t),f(t)\rangle$ for all $\varphi \in C([0,T],V')$.
	\end{enumerate} 
	Then, $f_n$ converges  to $f$ in the norm of $C([0,T],V)$, i.e.\
	\begin{equation*}
	\lim\limits_{n\to \infty}\sup\limits_{t\in[0,T]}\|f_n(t)-f(t)\|_V=0.
	\end{equation*}
\end{theorem}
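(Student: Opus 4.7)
The plan is to argue by contradiction, transposing the classical uniformly-convex Radon-Riesz argument to the parametric setting and using Lemma~\ref{lemma:uniform_convex} to produce a single continuous norming functional. Suppose that $f_n$ does not converge uniformly to $f$. Then there exist $\varepsilon_0>0$, a subsequence (not relabelled) and points $t_n\in[0,T]$ such that $\|f_n(t_n)-f(t_n)\|_V\geq\varepsilon_0$. By compactness of $[0,T]$ I pass to a further subsequence with $t_n\to t^\ast$.

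First I would dispose of the degenerate case $f(t^\ast)=0$: continuity of $f$ gives $\|f(t_n)\|_V\to 0$, and hypothesis (1) forces $\|f_n(t_n)\|_V\to 0$ as well, which contradicts $\|f_n(t_n)-f(t_n)\|_V\geq\varepsilon_0$. Hence I may assume $\|f(t^\ast)\|_V=\alpha>0$, and then, for $n$ large, both $\|f(t_n)\|_V$ and $\|f_n(t_n)\|_V$ are bounded below by $\alpha/2$ and converge to $\alpha$.

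Next I would apply Lemma~\ref{lemma:uniform_convex} with threshold $\alpha/2$ to obtain $\varphi\in C([0,T],V')$ satisfying $\langle\varphi(t),f(t)\rangle=\|f(t)\|_V$ and $\|\varphi(t)\|_{V'}=1$ whenever $\|f(t)\|_V\geq\alpha/2$, hence at every $t_n$ for $n$ large. Setting
\[
x_n=\frac{f_n(t_n)}{\|f_n(t_n)\|_V},\qquad y_n=\frac{f(t_n)}{\|f(t_n)\|_V},
\]
hypothesis (2) applied to this $\varphi$ together with the uniform norm convergence gives $\langle\varphi(t_n),x_n\rangle\to 1$, and by construction $\langle\varphi(t_n),y_n\rangle=1$. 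Dividing by $2$ and using $\|\varphi(t_n)\|_{V'}=1$, I conclude $\bigl\|\tfrac{x_n+y_n}{2}\bigr\|_V\to 1$.

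To close the contradiction I would invoke the uniform convexity of $V$. Writing $f_n(t_n)-f(t_n)=\|f_n(t_n)\|_V(x_n-y_n)+(\|f_n(t_n)\|_V-\|f(t_n)\|_V)y_n$ shows that $\|x_n-y_n\|_V\to 0$ would force $\|f_n(t_n)-f(t_n)\|_V\to 0$, which it does not; hence along a subsequence $\|x_n-y_n\|_V\geq\eta>0$. Uniform convexity of $V$ then yields $\delta>0$ with $\bigl\|\tfrac{x_n+y_n}{2}\bigr\|_V\leq 1-\delta$, contradicting the previous paragraph. The main conceptual obstacle is that the classical Radon-Riesz argument works pointwise, while here different points $t_n$ demand different norming functionals; this is exactly what Lemma~\ref{lemma:uniform_convex} overcomes by supplying a single continuous $\varphi$, after which the remaining work is a careful uniform transcription of the standard proof.
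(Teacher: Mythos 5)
Your proof is correct, and it rests on the same two pillars as the paper's: argue by contradiction and use Lemma~\ref{lemma:uniform_convex} to supply one continuous norming functional $\varphi$ that works simultaneously at all the bad points, then let uniform convexity of $V$ deliver the contradiction. The execution, however, is genuinely different. The paper stays quantitative and local: it fixes the threshold $\varepsilon/3$, picks a single index $N=\max(N_1,N_2)$ and a single point $\bar t$ with $\|f_N(\bar t)-f(\bar t)\|_V\geq\varepsilon$, shows $\|f(\bar t)\|_V>\varepsilon/3$ and $\|f_N(\bar t)\|_V<(1+\delta)\|f(\bar t)\|_V$, normalizes both vectors by the common factor $(1+\delta)\|f(\bar t)\|_V$ so that the definition of uniform convexity applies literally, and closes with an explicit $\delta^2$-bookkeeping against hypothesis (2). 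You instead extract bad points $t_n$ with $t_n\to t^\ast$ by compactness of $[0,T]$, normalize $f_n(t_n)$ and $f(t_n)$ exactly to unit vectors $x_n,y_n$, and run the classical sequential Radon--Riesz argument: $\bigl\|\tfrac{x_n+y_n}{2}\bigr\|_V\to 1$ from the pairing, while $\|x_n-y_n\|_V$ stays bounded below, contradicting uniform convexity. Your route buys a cleaner treatment of the degenerate case (you only need $f(t^\ast)=0$ handled locally, rather than the paper's global assumption $f\neq 0$ plus the separate estimate $\|f(\bar t)\|_V>\varepsilon/3$) and avoids the slightly awkward $(1+\delta)$ rescaling, at the cost of an extra compactness/subsequence extraction that the paper's pointwise argument does not need. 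Both the decomposition $f_n(t_n)-f(t_n)=\|f_n(t_n)\|_V(x_n-y_n)+(\|f_n(t_n)\|_V-\|f(t_n)\|_V)y_n$ and the limit $\langle\varphi(t_n),x_n\rangle\to 1$ are justified exactly because the hypotheses give \emph{uniform} convergence, so they survive evaluation along the moving points $t_n$; you use this correctly.
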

\begin{proof}
	If $f(t)=0$ for all $t$, the statement is clearly true due to the uniform convergence of the norm. Thus, let be $f\neq 0$.
	We suppose that $f_n$ does not converge uniformly to $f$ in  order to prove the theorem by contradiction. Then, there exists a subsequence, also denoted by $(f_n)_n$, such that there is an $\varepsilon>0$ with
	\begin{equation}
	\label{eq:not_converge}
	\|f_n-f\|_{C([0,T],V)}\geq \varepsilon
	\end{equation}
	for all $n$. 
	We choose $\delta\in (0,1]$ as in the Definition~\ref{def:uniform_convex} of the uniform convex Banach space $V$ for $\tilde{\varepsilon}=\frac{\varepsilon}{2\|f\|_{C([0,T],V)}}$ and $N_1$ such that it holds
	\begin{equation*}
	\Bigl|\|f_n(t)\|_V-\|f(t)\|_V\Bigr|< \frac{\varepsilon\delta}{3}
	\end{equation*}
	 for all $t$ and all $n\geq N_1$. Using Lemma~\ref{lemma:uniform_convex}, we find a function $\varphi\in C([0,T],V')$  with $\|\varphi(t)\|_{V'}=1$ and $\langle\varphi(t),f(t)\rangle=\|f(t)\|_{V}$ for all $t\in [0,T]$ with $\|f(t)\|_V\geq \frac{\varepsilon}3$.
	 
	Because of the uniform convergence of $\langle\varphi(t),f_n(t)\rangle$, there is also an integer $N_2$ such that it holds
	\begin{equation*}
	\sup\limits_{t\in [0,T]}\left|\frac{\langle\varphi(t),f_n(t)-f(t)\rangle}{2}\right|<\frac{ \delta^2\varepsilon}3
	\end{equation*}
	 for all $n\geq N_2$.
	 
	Due to \eqref{eq:not_converge}, we can choose $\bar t$ with
	\begin{equation*}
	\left\|f_N(\bar t)-f(\bar t)\right\|_V\geq\varepsilon
	\end{equation*}
	for $N=\max(N_1,N_2)$. Now, it follows 
	\begin{align*}
	2\|f(\bar t)\|_V&\geq \|f(\bar t)\|_V+\|f_N(\bar t)\|_V - \Bigl|\|f_N(\bar t)\|_V-\|f(\bar t)\|_V\Bigr|\\
	&\geq\|f_N(\bar t)-f(\bar t)\|_V - \Bigl|\|f_N(\bar t)\|_V-\|f(\bar t)\|_V\Bigr|\\
	&>\varepsilon - \frac{\delta\varepsilon}3\\
	&\geq \frac{2\varepsilon}3.
	\end{align*}

	Thus, it is $\|f(\bar t)\|_V>\frac{\varepsilon}{3}$ and we conclude
	\begin{align*}
	\|f_N(\bar t)\|_V&\leq\Bigl|\|f_N(\bar t)\|_V-\|f(\bar t)\|_V\Bigr|+\|f(\bar t)\|_V\\
	&\leq \frac{\delta\varepsilon}{3}+\|f(\bar t)\|_V\\
	&< (1+\delta)\|f(\bar t)\|_V
	\end{align*}
	and furthermore, we compute
	\begin{equation*}
	\left\|\frac{f_N(\bar t)}{(1+\delta)\|f(\bar t)\|}-\frac{f(\bar t)}{(1+\delta)\|f(\bar t)\|}\right\|_V\geq\frac{\varepsilon}{(1+\delta)\|f(\bar t)\|_V}\geq \frac{\varepsilon}{2\|f\|_{C([0,T]),V)}}=\tilde{\varepsilon}.
	\end{equation*}
	Therefore, we can use the uniform convexity of $V$ to conclude
	\begin{align*}
	\left|\frac{\langle \varphi(\bar t),f_N(\bar t)+f(\bar t)\rangle}2\right|&\leq\frac{\|f_N(\bar t)+f(\bar t)\|_V}{2}\\
	&=(1+\delta)\|f(\bar t)\|_V\Biggl\|\frac{\frac{f_N(\bar t)}{(1+\delta)\|f(\bar t)\|}+\frac{f(\bar t)}{(1+\delta)\|f(\bar t)\|}}{2}\Biggr\|_V\\
	&\leq \|f(\bar t)\|_V(1+\delta)(1-\delta)\\
	&=\|f(\bar t)\|_V(1-\delta^2).
	\end{align*}
	Altogether, this yields the contradiction
	\begin{align*}
	\|f(\bar t)\|_V&=\langle \varphi(\bar t),f(\bar t)\rangle\\
	&\leq\left|\frac{\langle \varphi(\bar t),f_N(\bar t)+f(\bar t)\rangle}2-\langle\varphi(\bar t),f(\bar t)\rangle\right|+\left|\frac{\langle \varphi(\bar t),f_N(\bar t)+f(\bar t)\rangle}2\right|\\
	&<\frac{\delta^2\varepsilon}3+\|f(\bar t)\|_V(1-\delta^2)\\
	&<\|f(\bar t)\|_V\delta^2+\|f(\bar t)\|_V(1-\delta^2)\\
	&= \|f(\bar t)\|_V.
	\end{align*}
\end{proof}
\section{Conclusion}
In this note, we have seen how the Radon-Riesz property can be transferred to spaces of continuous functions. In contrast to the usual Radon-Riesz property in a Banach space, just the uniform convexity of the space is not sufficient to prove the property for the space of continuous functions. We need additionally the uniform smoothness. 

However, the aim of this note was not to find minimal requirements for this statement to be valid. Clearly, a necessary condition for the property in $C([0,T],V)$ is the Radon-Riesz property in $V$ itself since otherwise we could construct a counter example using constant functions. In particular, it is not sufficient to require the reflexivity of the space (see \cite{Bauschke.2003}).

	\bibliographystyle{plain}
	\bibliography{../bibliothek}

\begin{thebibliography}{1}

\bibitem{Bauschke.2003}
Heinz~H. Bauschke and Patrick~L. Combettes.
\newblock {Construction of Best Bregman Approximations in Reflexive Banach
  Spaces}.
\newblock {\em Proceedings of the American Mathematical Society},
  131(12):3757--3766, 2003.

\bibitem{Dugundji.1951}
James Dugundji.
\newblock An extension of tietze's theorem.
\newblock {\em Pacific Journal of Mathematics}, 1(3):353--367, 1951.

\bibitem{megginson1998}
Robert~E. Megginson.
\newblock {\em An Introduction to Banach Space Theory}, volume 183 of {\em
  Graduate Texts in Mathematics}.
\newblock Springer, 1998.

\bibitem{radon1913}
Johann Radon.
\newblock {Theorie und Anwendungen der absolut additiven Mengenfunktionen}.
\newblock {\em Sitzungsberichte der Akademie der Wissenschaften in Wien},
  122:1295--1438, 1913.

\bibitem{riesz1928}
Fr{\'e}d{\'e}ric Riesz.
\newblock Sur la convergence en moyenne.
\newblock {\em Acta Scientiarum Mathematicarum}, 4(1):58--64, 1928/29.

\bibitem{RoggensackDiss.2014}
Arne Roggensack.
\newblock {\em Low Mach number equations with a heat source on networks -
  Modelling and analysis}.
\newblock PhD thesis, University of Hamburg, 2014.

\bibitem{RoggensackTransport.2014}
Arne Roggensack.
\newblock Well-posedness of space and time dependent transport equations on a
  network.
\newblock WIAS Preprint No. 2138, 2015.

\end{thebibliography}

\end{document}